\author{Ievgen~V.~Bondarenko, Rostyslav~V.~Kravchenko\thanks{The author was partially supported by NSF grant 0503688}}
\title{\textbf{On Lebesgue measure of integral\\ self-affine sets}}
\newcommand{\R}{\mathbb{R}}
\newcommand{\Z}{\mathbb{Z}}
\newcommand{\nucl}{\mathcal{N}}
\newcommand{\gr}{\Gamma}
\newtheorem{theorem}{Theorem}
\newtheorem{cor}[theorem]{Corollary}
\theoremstyle{definition}
\newtheorem{example}{Example}
\begin{document}
\maketitle

\begin{abstract}
Let $A$ be an expanding integer $n\times n$ matrix and $D$ be a finite subset of $\Z^n$. The
self-affine set $T=T(A,D)$ is the unique compact set satisfying the equality $A(T)=\cup_{d\in D}
(T+d)$. We present an effective algorithm to compute the Lebesgue measure of the self-affine set
$T$, the measure of the intersection $T\cap (T+u)$ for $u\in\Z^n$, and the measure of the
intersection of self-affine sets $T(A,D_1)\cap T(A,D_2)$ for different sets $D_1,D_2\subset\Z^n$.\\

\noindent \textbf{Keywords}: self-affine set, tile, graph-directed system, self-similar action.

\noindent \textbf{Mathematics Subject Classification 2000}: 28A80, 52C22\\
\end{abstract}

%28A80(Fractals), 52C20 (Tilings in $2$ dimensions), 52C22 (Tilings in $n$ dimensions),
%05B45 (Tessellation and tiling problems),

Let $A$ be an expanding integer $n\times n$  matrix, where expanding means that every eigenvalue has modulus greater than
$1$, and let $D$ be a finite subset of $\Z^n$. There exists a unique nonempty compact set $T=T(A,D)\subset\mathbb{R}^n$,
called (integral) \textit{self-affine set}, satisfying $A(T)=\cup_{d\in D} (T+d).$ It can be given explicitly by
\[
T=\left\{ \sum_{k=1}^{\infty} A^{-k}d_k : d_k\in D\right\}.
\]
The self-affine set $T$ with $|D|=|\det A|$ and of positive Lebesgue measure is called a
\textit{self-affine tile}. Self-affine tiles were intensively studied for the last two decades in
the context of self-replicating tilings, radix systems, Haar-type wavelets, etc.

The question of how to find the Lebesgue measure $\lambda(T)$ of the self-affine set $T$ was considered by Lagarias and
Wang in \cite{lag_wang:int_self_affine_I}, where some partial cases were studied. In particular, it was shown that
self-affine tiles have integer Lebesgue measure. He, Lau and Rao \cite{he_rao:self_affine} reduced the problem of finding
$\lambda(T)$ to the case when $D$ is a coset transversal for $\Z^n/A(\Z^n)$. The last case was treated by Gabardo and Yu
\cite{gab_yu:natur_til} and in more general settings by Bondarenko and Kravchenko \cite{BK:meas}. The positivity of the
Lebesgue measure of self-affine sets was also studied in
\cite{lag_wang:self_affine,kirat:leb_meas,deng_he:int_self_affine}.

%Self-affine sets arise in such contexts as self-replicating tilings, radix systems, Haar-type
%wavelets, etc.

In this note, we present a simple method to compute the Lebesgue measure $\lambda(T)$ of the self-affine set~$T$. We
construct a finite labeled graph (automaton) and show that $\lambda(T)$ is equal to the uniform Bernoulli measure of the
left-infinite sequences which can be read along paths in this graph. Similar graphs when $D$ is a coset transversal were
constructed in \cite{gab_yu:natur_til,canon_numsys} and other papers. In addition this method allows to find the measure
of the intersection $T\cap (T+u)$ for $u\in\Z^n$, and the measure of the intersection of self-affine sets $T(A,D_1)\cap
T(A,D_2)$ for different sets $D_1,D_2\subset\Z^n$. Our construction seems to be very natural and actually works for any
contracting self-similar group action (here the self-affine sets correspond to the self-similar actions of $\Z^n$, see
\cite[Section 6.2]{self_sim_groups} and \cite{BK:meas}).

We proceed as follows. If the set $D$ does not contain all coset representatives of $\Z^n/
A(\Z^n)$, we extend it to the set $K\supset D$ which does, and choose a coset transversal $C\subset
K$.

Construct a directed labeled graph (automaton) $\gr=\gr(A,K)$  with the set of vertices $\Z^n$, and we put a directed
edge from $u$ to $v$ for $u,v\in\Z^n$ labeled by the pair $(x,y)$ for $x,y\in K$ if $u+x=y+Av$. The \textit{nucleus} of
the graph $\gr$ is the subgraph (subautomaton) $\nucl$ spanned by all cycles of $\gr$ and all vertices that can be
reached following directed paths from the cycles. Since the matrix $A$ is expanding the nucleus $\nucl$ is a finite graph
and it can be algorithmically computed. Indeed, if $u+x=y+Av$ then
\[
\|v\|<\|u\| \ \mbox{ whenever } \ \|u\|>(1-\|A^{-1}\|)^{-1}\max_{x,y\in{K}}\|A^{-1}(x-y)\|,
\]
and the nucleus $\nucl$ is contained in the ball centered at the origin of radius given by the
right-hand side above. Remove every edge in $\nucl$ whose label is not in $C\times D$, and replace
every label $(a,b)$ by $a$. We get some finite graph $\nucl_D$ whose edges are labeled by elements
of the set $C$.

Let $C^{-\omega}$ be the space of all left-infinite sequences $\ldots x_2x_1$, $x_i\in C$, with the
product topology of discrete sets. Let $\mu$ be the uniform Bernoulli measure on $C^{-\omega}$,
i.e. the product measure with $\mu(x)=1/|C|$ for every $x\in C$. For every vertex $v$ of the graph
$\nucl_D$ denote by $F_v$ the set of all left-infinite sequences which can be read along
left-infinite paths in $\nucl_D$ that end in $v$. The sets $F_v$ are closed in $C^{-\omega}$, thus
compact and measurable.

\begin{theorem}
The Lebesgue measure of the self-affine set $T$ is equal
\[
\lambda(T)=\sum_{v\in \nucl_D} \mu(F_v).
\]
\end{theorem}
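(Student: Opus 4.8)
The plan is to identify each sequence set $F_v$ with a geometric slice of the transversal tile $T_C:=T(A,C)$ and then convert the Bernoulli measure $\mu$ into Lebesgue measure. Write $\Phi_C(\ldots x_2x_1)=\sum_{k\ge1}A^{-k}x_k$ for the address map of $C$, with image $T_C$, and $\Phi_D$ for the analogous map with digits in $D$, with image $T:=T(A,D)$.

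First I would unfold the defining relations of the edges along a left-infinite path. A path in $\nucl_D$ ending at $v$ reads a label sequence; restoring the suppressed second coordinates, it is a path $\cdots\to u_1\to v$ in $\gr(A,K)$ whose consecutive edges carry labels $(x_k,y_k)\in C\times D$ and satisfy $u_{k+1}+x_{k+1}=y_{k+1}+Au_k$ with $u_0=v$. Solving these recursively gives, for every $N$,
\[
v=\sum_{k=1}^{N}A^{-k}(x_k-y_k)+A^{-N}u_N .
\]
Since the $u_N$ range over the finite vertex set of $\nucl$ and $A$ is expanding (so $\|A^{-N}\|\to0$), letting $N\to\infty$ yields $v=\Phi_C(\ldots x_2x_1)-\Phi_D(\ldots y_2y_1)$. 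Conversely, given any $t\in T_C\cap(T+v)$, choose a $C$-address $\xi=\ldots x_2x_1$ of $t$ and a $D$-address $\eta=\ldots y_2y_1$ of $t-v$; the integer vectors $u_N:=A^Nv-\sum_{k=1}^{N}A^{N-k}(x_k-y_k)=\sum_{j\ge1}A^{-j}(x_{N+j}-y_{N+j})$ are uniformly bounded, so the resulting left-infinite path lies in $\nucl$ (a bounded infinite path must contain a cycle) and survives in $\nucl_D$ because its labels lie in $C\times D$. Hence $\xi\in F_v$ if and only if $\Phi_C(\xi)\in T+v$, and in particular $F_v=\emptyset$ unless $v$ is a vertex of $\nucl_D$.

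Second, I would identify the pushforward $\nu:=(\Phi_C)_*\mu$. Because $|C|=|\det A|$, we have $\lambda(A(T_C))=|\det A|\,\lambda(T_C)=\sum_{c\in C}\lambda(T_C+c)$, and since $A(T_C)=\bigcup_{c\in C}(T_C+c)$ this forces the sets $T_C+c$, $c\in C$, to be pairwise essentially disjoint. Thus the normalized Lebesgue measure $\tfrac1{\lambda(T_C)}\lambda|_{T_C}$ satisfies the same self-similarity identity $\rho=\tfrac1{|C|}\sum_{c\in C}(\phi_c)_*\rho$, $\phi_c(t)=A^{-1}(t+c)$, that characterizes $\nu$; by uniqueness of the invariant measure of a contracting system, $\nu=\tfrac1{\lambda(T_C)}\lambda|_{T_C}$. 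Here I use the classical fact (Lagarias--Wang) that for a complete residue system $C$ the tile $T_C$ has positive measure and its $\Z^n$-translates tile $\R^n$ with constant multiplicity equal to $m:=\lambda(T_C)$.

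Finally I would assemble the measure count. Using $\sum_v\mathbf 1_{F_v}(\xi)=\#\{v:\Phi_C(\xi)\in T+v\}=m_D(\Phi_C(\xi))$, where $m_D(t):=\#\{v\in\Z^n:t\in T+v\}$, Tonelli and the pushforward give
\[
\sum_{v\in\nucl_D}\mu(F_v)=\int m_D\,d\nu=\frac1m\int_{T_C}m_D(t)\,dt=\frac1m\sum_{v\in\Z^n}\lambda\big(T_C\cap(T+v)\big).
\]
Rewriting each term as $\lambda((T_C-v)\cap T)$ and summing first over the $\Z^n$-translates of $T_C$ turns the last sum into $\int_{T}\big(\sum_{g}\mathbf 1_{T_C+g}(s)\big)\,ds=\int_T m\,ds=m\,\lambda(T)$, whence $\sum_v\mu(F_v)=\lambda(T)$. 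The main obstacle is the second step: pinning down $\nu$ as normalized Lebesgue measure and justifying the constant multiplicity. The cardinality identity $|C|=|\det A|$ is exactly what forces the subdivision to be a.e.\ disjoint (so that $\nu$ is uniform rather than a singular self-affine measure), while positivity of $\lambda(T_C)$ and the lattice tiling property must be imported from the theory of integral self-affine tiles.
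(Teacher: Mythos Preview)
Your proof is correct and follows essentially the same route as the paper: identify $F_v$ with the set of $C$-addresses of points in $T+v$ by unwinding the edge relation along left-infinite paths in $\nucl$, and then convert Bernoulli measure into Lebesgue measure through the address map $\Phi_C$. The only difference is packaging: the paper works on the product $C^{-\omega}\times\Z^n$, cites as a black box that the push-forward of (Bernoulli $\times$ counting) under $\Phi_C$ is Lebesgue on $\R^n$, and reads off $\lambda(T)$ from the disjoint decomposition $\Phi_C^{-1}(T)=\bigcup_{v}F_v\times\{-v\}$, whereas you stay on $C^{-\omega}$ and unpack that same black box explicitly as Hutchinson uniqueness for $(\Phi_C)_*\mu=\tfrac{1}{\lambda(T_C)}\lambda|_{T_C}$ together with the Lagarias--Wang constant-multiplicity $\Z^n$-tiling by $T_C$.
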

\begin{proof}
Consider the map $\Phi:K^{-\omega}\times\Z^n\rightarrow\R^n$ given by the rule
\[
\Phi(\ldots x_2x_1, v)=v+A^{-1}x_1+A^{-2}x_2+\ldots,
\]
where $x_i\in K$ and $v\in\Z^n$. Since $\Z^n=K+A(\Z^n)$, the map $\Phi$ is onto (see \cite{lag_wang:self_affine} or
\cite[Section~6.2]{self_sim_groups}). Two elements $\xi=(\ldots x_2x_1, v)$ and $\zeta=(\ldots y_2y_1, u)$ for
$x_i,y_i\in K$ and $v,u\in\Z^n$ represent the same point $\Phi(\xi)=\Phi(\zeta)$ in $\R^n$ if and only if there is a
finite subset $B\subset\Z^n$ and a sequence $\{v_m\}_{m\geq 1}\in B$ such that there exists the path
\begin{equation}\label{eq_path}
v_m\xrightarrow{(x_m,y_m)} v_{m-1}\xrightarrow{(x_{m-1},y_{m-1})} \ldots \xrightarrow{(x_2,y_2)}
v_1 \xrightarrow{(x_1,y_1)} u-v
\end{equation}
in the graph $\gr$ for every $m\geq 1$. Indeed, this path implies that
\begin{equation}\label{eq_sum_m}
v_m+x_m+Ax_{m-1}+\ldots+A^{m-1}x_1+A^mv = y_m+Ay_{m-1}+\ldots+A^{m-1}y_1+A^mu.
\end{equation}
Applying $A^{-m}$ and using the facts that $A^{-1}$ is contracting and the sequence $\{v_m\}_{m\geq 1}$ attains a finite
number of values, we get the equality  $\Phi(\xi)=\Phi(\zeta)$. For the converse, we choose $v_m$ such that
(\ref{eq_sum_m}) holds, and using the equality $\Phi(\xi)=\Phi(\zeta)$ we get that $\{v_m\}_{m\geq 1}$ attains a finite
number of values. Notice that since the set $B$ is assumed to be finite, every element $v_m$ lies either on a cycle or
there is a directed path from a cycle to $v_m$. In particular, all elements $v_m$ should belong to the nucleus $\nucl$,
and we have that the elements $\xi$ and $\zeta$ represent the same point in $\R^n$ if and only if there exists a
left-infinite path in $\nucl$ labeled by $(\ldots x_2x_1, \ldots y_2y_1)$ and ending in $u-v$.

Take the restriction $\Phi_C:C^{-\omega}\times\Z^n\rightarrow\R^n$ of the map $\Phi$ on the subset
$C^{-\omega}\times\Z^n$. Since $\Z^n=C+A(\Z^n)$, the map $\Phi_C$ is also onto, and this gives an encoding of points in
$\R^n$ by elements of $C^{-\omega}\times\Z^n$. Consider the uniform Bernoulli measure $\mu$ on the space $C^{-\omega}$
and the counting measure on the group $\Z^n$, and put the product measure on the space $C^{-\omega}\times\Z^n$. Since the
set $C$ is a coset transversal, the push-forward of this measure under $\Phi_C$ is the Lebesgue measure on $\R^n$ (see
\cite[Proposition 25]{BK:meas}). Hence to find the Lebesgue measure of the self-affine set $T$ it is sufficient to find
the measure of its preimage in $C^{-\omega}\times\Z^n$. However, $T$ is equal to $\Phi(D^{-\omega}\times 0)$, and hence
the sequence $(\ldots x_2x_1, v)$ for $x_i\in C$ and $v\in\Z^n$ represents a point in $T$ if and only if there exists a
left-infinite path in the nucleus $\nucl$, which ends in $-v$ and is labeled by $(\ldots x_2x_1,\ldots y_2y_1)$ for some
$y_i\in D$. Hence
\begin{equation}\label{eq_preimage}
\Phi^{-1}_C(\Phi(D^{-\omega}\times 0))=\bigcup_{v\in \nucl_D} F_v\times \{-v\},
\end{equation}
and the statement follows.
\end{proof}

The Bernoulli measure of the sets $F_v$ for any finite graph $\gr=(V,E)$ can be effectively computed (see
\cite[Section~2]{BK:meas}). First, we can assume that the graph is left-resolving, i.e. for every vertex $v\in V$ the
incoming edges to $v$ have different labels. Indeed, for any finite graph $\gr=(V,E)$ there exists a left-resolving graph
$\gr'=(V',E')$ with the property that for every $v\in V$ there exists $v'\in V'$ such that $F_v=F_{v'}$, and this graph
can be easily constructed (here every vertex $v'$ corresponds to some subset of $V$, see
\cite[Section~2.3]{autom_theory}). For a left-resolving graph the vector $(\mu(F_v))_{v\in V}$ (if it is nonzero) is the
left eigenvector of the adjacency matrix of the graph for the eigenvalue $|C|=|\det A|$. This eigenvector is uniquely
defined if we know its entries $\mu(F_v)$ for vertices $v$ in the strongly connected components without incoming edges.
For every such a component $\gr'$, we have $F_v=C^{-\omega}$ and $\mu(F_v)=1$ for every vertex $v$ in $\gr'$ if inside
this component every vertex has incoming edges labeled by every element of the set $C$, and $\mu(F_v)=0$ otherwise. In
particular, the entries $\mu(F_v)$ are rational numbers, and we recover the following result of
\cite{he_rao:self_affine}.

%the sequence $\vec 1 (\frac{1}{\det B}B)^n$, where $\vec 1=(1,1,\ldots,1)$, converges to the vector
%$(\mu(F_v))_{v\in V}$.

\begin{cor}
Every self-affine set has rational Lebesgue measure.
\end{cor}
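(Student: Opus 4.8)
The plan is to read off rationality directly from the formula $\lambda(T)=\sum_{v\in\nucl_D}\mu(F_v)$ supplied by the Theorem. Since $\nucl_D$ is a finite graph, the sum has finitely many terms, so it suffices to prove that each $\mu(F_v)$ is a rational number; a finite sum of rationals is then rational. Thus the whole problem reduces to computing the Bernoulli measures $\mu(F_v)$ for a fixed finite labeled graph.

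First I would reduce to the case of a left-resolving graph. Given $\nucl_D$, the standard subset (determinization) construction produces a left-resolving graph $\gr'=(V',E')$ whose vertices correspond to subsets of the original vertex set and for which each $F_v$ equals $F_{v'}$ for a suitable $v'\in V'$. So it is enough to establish rationality of the measures attached to a left-resolving graph. Next I would set up the linear recurrence they satisfy: a left-infinite path ending at $v$ is obtained by extending a left-infinite path ending at a predecessor $w$ by an incoming edge whose label is appended as the last symbol, so $F_v=\bigcup_{w\to v}F_w a$, where $a$ is the label of the edge $w\to v$. Because the graph is left-resolving the incoming edges to $v$ carry distinct labels, making this union disjoint, and appending a fixed symbol scales the Bernoulli measure by $1/|C|$. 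Hence $|C|\,\mu(F_v)=\sum_{w} M_{wv}\,\mu(F_w)$, where $M$ is the integer adjacency matrix; equivalently, $(\mu(F_v))_v$ is a left eigenvector of $M$ for the integer eigenvalue $|C|=|\det A|$.

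Finally I would propagate rational boundary data through this rational linear system. On each strongly connected component receiving no incoming edges from outside, the value $\mu(F_v)$ is forced to lie in $\{0,1\}$: it equals $1$ exactly when every vertex of the component has incoming edges labeled by all elements of $C$, so that $F_v=C^{-\omega}$, and it equals $0$ otherwise. These boundary values are rational. Knowing them pins down the eigenvector uniquely, and since the defining equations $|C|\,\mu(F_v)=\sum_w M_{wv}\,\mu(F_w)$ have integer coefficients, solving for the remaining entries component-by-component along the condensation DAG of the graph keeps all values rational. Therefore every $\mu(F_v)$, and hence $\lambda(T)$, is rational.

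The hard part will be the last step: verifying that the values on the source components are genuinely $0$ or $1$, and that these boundary conditions together with the eigenvector relation determine all the other $\mu(F_v)$ uniquely, so that the propagation along the condensation DAG yields a well-posed rational linear system rather than an underdetermined one. Once this well-posedness is secured, rationality of $\lambda(T)$ is immediate.
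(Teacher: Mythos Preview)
Your proposal is correct and follows essentially the same route as the paper: reduce to a left-resolving graph via the subset construction, observe that $(\mu(F_v))_v$ is a left eigenvector of the integer adjacency matrix for the integer eigenvalue $|C|=|\det A|$, pin down the values on source strongly connected components as $0$ or $1$, and conclude that the resulting rational linear system forces all $\mu(F_v)$ to be rational. The paper states these steps more tersely (citing \cite{BK:meas} for the computation of $\mu(F_v)$), but the argument is the same.
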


It is also easy to check when the measure of $T$ is non-zero without calculating its precise value but just looking at
the left-resolving graph (not the graph $\nucl_D$) constructed above. The measure $\lambda(T)$ will be positive if and
only if there exists a strongly connected component such that inside this component every vertex has incoming edges
labeled by every letter of the alphabet.

\begin{example}
Let $A=(3)$ and $D=\{0,1,5,6\}$. The self-affine set $T$ is $[0,\frac{4}{3}]\cup [\frac{5}{3},3]$,
and $\lambda(T)=8/3$. Choose $K=D$ and the coset transversal $C=\{0,1,5\}$. The associated
automaton $\nucl_D$ is shown in Figure~\ref{fig_Example1}. Here $\mu(F_0)=1$, $\mu(F_1)=1/3$,
$\mu(F_2)=1/8$, $\mu(F_{-1})=7/12$, $\mu(F_{-2})=5/8$, and $\mu(F_{-3})=\mu(F_3)=0$.
\end{example}

\begin{figure}
\begin{center}
\psfrag{0}{\small{$0$}} \psfrag{1}{\small{$1$}} \psfrag{3}{\small{$3$}} \psfrag{5}{\small{$5$}}
\psfrag{0,1,5}{\small{$0,1,5$}} \psfrag{0,1}{\small{$0,1$}} \psfrag{0,5}{\small{$0,5$}}
\psfrag{2}{\small{$2$}} \psfrag{-1}{\small{$-1$}} \psfrag{-2}{\small{$-2$}}
\psfrag{-3}{\small{$-3$}} \epsfig{file=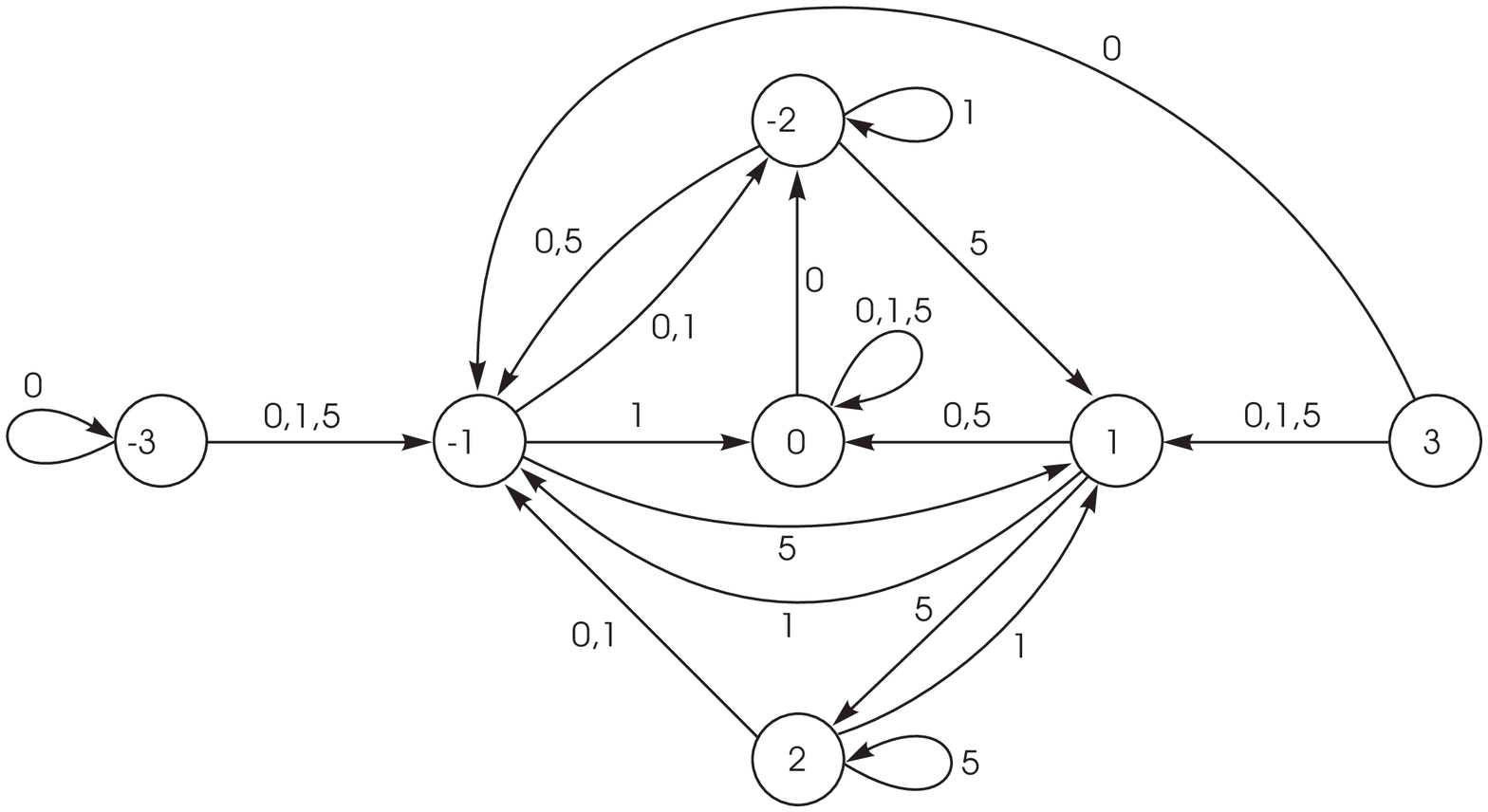,height=200pt}\caption{The graph $\nucl_D$
for $A=(3)$ and $D=\{0,1,5,6\}$} \label{fig_Example1}
\end{center}
\end{figure}

The above method can be used to find $\lambda(T\cap (T+u))$ for $u\in\Z^n$. The set $T+u$ is the
image of the set $D^{-\omega}\times u$, and its preimage under $\Phi_C$ can be described as in
(\ref{eq_preimage}). In particular
\[
\lambda(T\cap(T+u))=\sum_{{\scriptsize\begin{array}{c}
                              v_1,v_2\in\nucl_D \\
                              u=v_2-v_1
                            \end{array}}} \mu (F_{v_1}\cap F_{v_2}).
\]
Similarly, one can find the measure of the intersection of self-affine sets $T_1=T(A,D_1)$ and
$T_2=T(A,D_2)$ for different sets $D_1,D_2\subset\Z^n$. We take a set $E$ which contains $D_1$,
$D_2$, and some coset transversal $C$, and as above we construct the nucleus $\nucl$ and its
subgraphs $\nucl_{D_1}$ and $\nucl_{D_2}$. Then
\[
\lambda(T_1\cap T_2)=\sum_{v\in\nucl} \mu(F^{(1)}_v\cap F^{(2)}_v),
\]
where $F^{(i)}_v$ is calculated in the graph $\nucl_{D_i}$. Hence these two problems are reduced to the question of how
to find the measure of the intersection $F^{(1)}_{v_1}\cap F^{(2)}_{v_2}$, where each set $F^{(i)}_{v_i}$ is defined in
some finite graph $\gr^{(i)}=(V^{(i)}, E^{(i)})$ with its vertex $v_i$. One can construct a new finite graph~$\gr$
(sometimes called the labeled product of graphs $\gr^{(i)}$) with the set of vertices $V^{(1)}\times V^{(2)}$, where we
put an edge $(u_1,u_2)\stackrel{x}\rightarrow (w_1,w_2)$ for every edges $u_1\stackrel{x}\rightarrow w_1$ in $\gr^{(1)}$
and $u_2\stackrel{x}\rightarrow w_2$ in $\gr^{(2)}$. Then $F_{(v_1,v_2)}=F^{(1)}_{v_1}\cap F^{(2)}_{v_2}$ (see
\cite[Section~3.2]{autom_theory}).

\bibliographystyle{plain}
%\bibliography{bibl}

\end{document}